\address{Université Paris-Dauphine, CEREMADE, Place du Maréchal Lattre
  de Tassigny, 75775 Paris Cedex 16, France.}
\email{levitt@ceremade.dauphine.fr}
\thanks{Support from the grant
  ANR-10-BLAN-0101 of the French Ministry of Research is gratefully
  acknowledged}
\subjclass[2010]{Primary: 35P15; Secondary: 81Q10}
\keywords{Lieb-Thirring inequalities, finite elements.}
\begin{document}
\title{Best constants in Lieb-Thirring inequalities: a numerical
  investigation}

\maketitle
\begin{abstract}
  We investigate numerically the optimal constants in Lieb-Thirring
  inequalities by studying the associated maximization problem. We use
  a monotonic fixed-point algorithm and a finite element
  discretization to obtain trial potentials which provide lower bounds
  on the optimal constants. We examine the one-dimensional and radial
  cases in detail. Our numerical results provide new lower bounds,
  insight into the behavior of the maximizers and confirm some
  existing conjectures. Based on our numerical results, we formulate a
  complete conjecture about the best constants for all possible values
  of the parameters.
\end{abstract}

\section{Introduction}
In this paper we study the family of Lieb-Thirring
inequalities\cite{lieb1975bound,lieb1976ineq}, which state that for
any potential $V \in L^{\gamma+\frac d 2}(\R^{d},\R)$ and non-negative
exponent $\gamma$,
\begin{align}
  \label{LT}
  \tr((-\Delta + V)_{-}^{\gamma}) \leq L_{\gamma,d} \int V^{\gamma + \frac d 2}_{-},
\end{align}
where $V_{-} = \max(-V,0)$ is the negative part of $V$, and $(-\Delta
+ V)_{-}^{\gamma}$ is the $\gamma$-th power of the negative part of
the operator $(-\Delta + V)$, as defined by the functional
calculus. In other words, $\tr((-\Delta + V)_{-}^{\gamma}) = \sum_{i}
|\lambda_{i}|^{\gamma}$ is the $\gamma$-th moment of the negative
eigenvalues of $-\Delta + V$.

This inequality was originally used by Lieb and Thirring in the case
$\gamma = 1, d = 3$ as an important tool to prove the stability of
fermionic matter in \cite{lieb1975bound} (see
\cite{lieb1990stability,liebseiringer} for an overview). The
generalization \eqref{LT} to general $\gamma$ and $d$ has since then
attracted a great deal of attention (see for instance
\cite{laptev2000recent} for a review). Of particular interest are the
values of $\gamma$ and $d$ for which this inequality holds, and the
value of the optimal constants $L_{\gamma,d}$.

Despite the physical significance of the Lieb-Thirring inequality and
the amount of mathematical research on the subject, many questions
remain. This paper aims to investigate some of these numerically,
something that, to our knowledge, has not been done since the work of
Barnes in the appendix of the original paper by Lieb and Thirring
\cite{lieb1976ineq}, in 1976.

We now briefly review what is known about $L_{\gamma,d}$. It is easy
to see that the bound \eqref{LT} cannot hold for $\gamma < 1/2$ when
$d = 1$ by scaling, and $\gamma = 0$ when $d = 2$ (because any
arbitrarily small potential in two dimension creates at least one
bound state). The inequality was proved for $\gamma > 1/2$ in one
dimension and $\gamma > 0$ in other dimensions in the original paper
by Lieb and Thirring\cite{lieb1976ineq}. The proof in the case $\gamma
= 1$ was recently greatly simplified by
Rumin\cite{rumin2011balanced}. The case $\gamma = 0, d \geq 3$
requires completely different methods and was proven independently by
Cwikel, Lieb and
Rosenblum\cite{cwikel1977weak,liebclr,rozenblum1972}. The limit case
$\gamma = 1/2, d = 1$ remained unsolved for twenty years until it was
finally settled by Weidl\cite{weidl1996}, and refined with a sharp
constant by Hundertmark, Lieb and Thomas\cite{hundertmark1998sharp}.

The inequality \eqref{LT} can be interpreted as a comparison of the
energy of the quantum mechanical system given by its Hamiltonian
$-\Delta + V$, with its semiclassical approximation that only involves
integrals of $V$. The semiclassical regime is obtained by letting the
Planck constant $\hbar$ tend to zero in the Hamiltonian $-\hbar\Delta
+ V$. In this paper, we have scaled $\hbar$ away from inequality
\eqref{LT} for convenience, and the corresponding limit is a large (or
extended) $V$. The eigenfunctions become localized in the region of
the phase space defined by $p^{2} + V(x) \leq 0$ and explicit
computations are possible in the limit. More precisely, using the Weyl
asymptotics, one can prove
\begin{align*}
  \lim\limits_{\mu \to \infty}\frac{\tr((-\Delta + \mu
    V)_{-}^{\gamma})}{\int (\mu V)^{\gamma + \frac d 2}} =  L_{\gamma,d,\text{sc}},
\end{align*}
where
\begin{align*}
  L_{\gamma,d,\text{sc}} = 2^{-d} \pi^{-d/2}
  \frac{\Gamma(\gamma+1)}{\Gamma(\gamma+\frac d 2 + 1)}.
\end{align*}
Therefore, denoting by $L_{\gamma,d}$ the optimal constant in
\eqref{LT}, we have $L_{\gamma,d} \geq L_{\gamma,d,\text{sc}}$, and we
set
\begin{align*}
  R_{\gamma,d} = \frac{L_{\gamma,d}}{L_{\gamma,d,\text{sc}}} \geq
  1.
\end{align*}

The value of $R_{\gamma,d}$ describes by how much the quantum
mechanical energy can exceed its semiclassical counterpart. This paper
is an investigation of $R_{\gamma,d}$. It aims at providing insight as
to its behavior by trying to solve numerically the variational problem
\begin{align}
  \tag{$P_{\gamma,d}$}
  \label{variational}
  R_{\gamma,d} = \frac 1 {L_{\gamma,d,\text{sc}}}\sup_{\int V^{\gamma + \frac d 2} = 1} \tr((-\Delta +
  V)_{-}^{\gamma}),
\end{align}
where we impose the condition $\int V^{\gamma + \frac d 2} = 1$ to
eliminate the scaling invariance of the Lieb-Thirring inequality.

Several important facts about $R_{\gamma,d}$ are known. A scaling
argument by Aizenman and Lieb \cite{aizenman1978semi} shows that
$R_{\gamma,d}$ is decreasing with respect to $\gamma$. Laptev and
Weidl \cite{laptev2000gammathreehalves} showed that $R_{\gamma,d} = 1$
for $\gamma \geq 3/2$. Helffer and Robert \cite{helffer1990riesz}
proved that $R_{\gamma,d} > 1$ for $\gamma < 1$ by expanding
$\tr(-\Delta + V)_{-}^{\gamma}$ for the harmonic potential $V(x) = 1 -
\abs{x}^{2}$ in the semiclassical limit. From these results, we can
deduce that for each dimension $d$ there exists a critical
$\gamma_{c,d}$ such that
\begin{align*}
  \begin{cases}
        R_{\gamma,d}> 1 & \text{when } \gamma < \gamma_{c,d},\\
        R_{\gamma,d} = 1 & \text{when } \gamma \geq \gamma_{c,d},\\
  \end{cases}
\end{align*}
with
\begin{align*}
  1 \leq \gamma_{c,d} \leq \frac 3 2.
\end{align*}

A trial potential of \cite{lieb1976ineq} provides a lower bound on
$R_{\gamma,d}$ and therefore $\gamma_{c}$. In the appendix of this paper,
Barnes solved numerically the restricted problem
\begin{align}
  \label{variational_1}
  R_{\gamma,d,1} = \frac 1 {L_{\gamma,d,\text{sc}}}\sup_{\int V^{\gamma + \frac d 2} = 1} (\lambda_{1})_{-}^{\gamma},
\end{align}
where $\lambda_{1}$ is the lowest eigenvalue of $-\Delta + V$. This is
equivalent to restricting \eqref{variational} to the potentials $V$
such that $-\Delta + V$ only has one negative eigenvalue. The solution
$V_{\gamma,d,1}$ of this problem is negative, radial and only has
bound state, \ie $-\Delta + V_{\gamma,d,1}$ only has one negative
eigenvalue. The corresponding $R_{\gamma,d,1}$ is decreasing, and
intersects $1$ at a critical $\gamma_{c,d,1}$. In low dimensions,
$\gamma_{c,1,1} = \frac 3 2$, $\gamma_{c,2,1} \approx 1.165$,
$\gamma_{c,3,1} \approx 0.863$ (our numerical results agree with these
values). Therefore, $\gamma_{c,1} = \frac 3 2$ and $\gamma_{c,2} \geq
1.165$, but nothing can be said about $\gamma_{c,d}$ for $d \geq 3$. A
famous conjecture of Lieb and Thirring states that $R_{\gamma,d} = 1,
$ \ie $\gamma_{c,3} = 1$. This would imply improved bounds on the
energy of a system of fermions, and is of great importance to the
Thomas-Fermi theory\cite{lieb1981thomas}.




Better upper bounds on $R_{\gamma,d}$ have also been derived
recently. For instance, it is proven that $R_{\gamma,d} \leq \frac {2
  \pi}{\sqrt 3} \approx 3.63$ for $\gamma \geq \frac 1 2$,
$R_{\gamma,d} \leq \frac \pi {\sqrt 3} \approx 1.82$ for $\gamma \geq
1$ (see \cite{dolbeault}), by generalizing the inequality to
matrix-valued potentials.

Despite these advances on upper bounds, not much is known about lower
bounds for $R_{\gamma,d}$, except for the one-bound state potential
$V_{\gamma,d,1}$ of \cite{lieb1976ineq} and the asymptotic result in
the semiclassical limit of \cite{helffer1990riesz}. In this paper, we
attempt to bridge the gap between these two results by looking
numerically for maximizers of the variational problem
\eqref{variational}. To our knowledge, this is the first numerical
study of the Lieb-Thirring inequalities since the work of
Barnes\cite{lieb1976ineq} (and unpublished recent work by Arnold and
Dolbeault \cite{dolbeaultcomm} in 1D).

The method we use is a finite element discretization of a fixed point
algorithm. We describe this algorithm, and specialize it to the case
of radial potentials. Then we discretize it, and use it to obtain
numerical results. The critical points we obtain are the natural
generalization of the potential with one bound state obtained by
Barnes. They serve as a partial bridge between this potential and
asymptotic results, and yield new lower bounds on $R_{\gamma,d}$.
\section{The optimization algorithm}
\subsection{Optimization scheme}
Let us denote $E(V) = \tr((-\Delta + V)_{-}^{\gamma})$, so that
$$L_{\gamma,d} = \sup_{\int V^{\gamma+\frac d 2} = 1} E(V).$$

The maximization algorithm is based on the following well-known
property:
\begin{proposition}
  \label{prop}
  For any $V \in L^{\gamma+\frac d 2}(\R^{d},\R)$ and any $\gamma \geq 1$,
  \begin{align*}
    E(V)^{1/\gamma} = \max_{\substack{\tau \geq 0 \\ \norm{\tau}_{\gamma'} = 1}} - \tr((-\Delta + V)\tau),
  \end{align*}
  where $\norm{\tau}_{\gamma'} = \lp \tr \lp|\tau|^{\gamma'}\rp\rp^{1/\gamma'}$ is the Schatten norm of the
  operator $\tau$ with exponent $\gamma' = \frac {\gamma}{\gamma - 1}$,
  the Hölder conjugate of $\gamma$.
\end{proposition}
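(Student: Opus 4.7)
This is the standard Hölder duality between Schatten classes, specialized to the negative part of $H := -\Delta + V$. The plan is to establish the two inequalities of the claimed maximum separately. For the upper bound, using the functional calculus I would split $H = H_+ - H_-$ with $H_\pm \ge 0$ and $H_+ H_- = 0$. For any admissible $\tau \ge 0$ with $\norm{\tau}_{\gamma'} = 1$, positivity gives $\tr(H_+ \tau) = \tr(H_+^{1/2} \tau H_+^{1/2}) \ge 0$, so
\begin{align*}
  -\tr(H\tau) = \tr(H_- \tau) - \tr(H_+ \tau) \le \tr(H_- \tau).
\end{align*}
Hölder's inequality in the Schatten classes then bounds this by $\norm{H_-}_\gamma \norm{\tau}_{\gamma'} = (\tr H_-^\gamma)^{1/\gamma} = E(V)^{1/\gamma}$.

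For the matching lower bound, I would exhibit the explicit optimizer
\begin{align*}
  \tau_\star := \frac{H_-^{\gamma-1}}{(\tr H_-^\gamma)^{1/\gamma'}},
\end{align*}
defined via the functional calculus (interpreted as the spectral projection $P_-$ onto the negative subspace when $\gamma = 1$, so $\gamma' = \infty$). The identity $(\gamma-1)\gamma' = \gamma$ immediately yields $\tau_\star \ge 0$ and $\norm{\tau_\star}_{\gamma'} = 1$. Since $H_-^{\gamma-1}$ is supported on the negative spectral subspace of $H$, one has $H \tau_\star = -H_- \tau_\star$, whence
\begin{align*}
  -\tr(H\tau_\star) = \frac{\tr H_-^\gamma}{(\tr H_-^\gamma)^{1/\gamma'}} = E(V)^{1/\gamma},
\end{align*}
closing the chain of inequalities.

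The only genuinely delicate point is the functional-analytic bookkeeping: one needs $H_-$ to belong to the Schatten class of order $\gamma$ so that $E(V)$ is finite, $\tau_\star$ has unit $\gamma'$-norm, and all the traces above are absolutely convergent. This is exactly the content of the Lieb-Thirring inequality \eqref{LT} under the standing hypothesis $V \in L^{\gamma+\frac d 2}(\R^d)$, so no further estimation is required. The boundary case $\gamma = 1$, $\gamma' = \infty$ is the only notational wrinkle: the Schatten $\infty$-norm must be read as the operator norm, and the argument then reduces to the familiar Rayleigh--Ritz principle on the range of $P_-$.
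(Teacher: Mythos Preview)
Your argument is correct and follows essentially the same route as the paper: bound $-\tr(H\tau)\le\tr(H_-\tau)$ using positivity, apply the Schatten H\"older inequality, and then exhibit the optimizer $\tau_\star \propto H_-^{\gamma-1}$. You add a bit more care than the paper does (the justification $\tr(H_+\tau)\ge 0$, the finiteness of $\tr H_-^\gamma$ via \eqref{LT}, and the $\gamma=1$ endpoint), but the structure is identical.
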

\begin{proof}
  We have
  \begin{align*}
    -\tr((-\Delta + V)\; \tau) &\leq \tr ((-\Delta + V)_{-} \;\tau)\\
    &\leq \norm{(-\Delta + V)_{-}}_{\gamma} \norm{\tau}_{\gamma'}\\
    &\leq \tr((-\Delta + V)_{-}^{\gamma})^{1/\gamma}\\
    &\leq E(V)^{1/\gamma},
  \end{align*}
  and the equality is achieved when
  \begin{align}
    \label{tau_V}
    \tau = K_{\tau} (-\Delta + V)_{-}^{\gamma-1},
  \end{align}
  where $K_{\tau}$ is a normalization constant, chosen to ensure that
  $\norm{\tau}_{\gamma'} = 1$.

\end{proof}

From the proposition, we see that when $\gamma \geq 1$,
\begin{align*}
  (L_{\gamma,d})^{1/\gamma} = \sup_{\substack{\int V^{\gamma+\frac d 2} = 1,\\ \tau \geq
    0, \,\norm{\tau}_{\gamma'} = 1}} -\tr ((-\Delta+V)\tau).
\end{align*}

What we gain from this formulation is that we can explicitely maximize
$-\tr ((-\Delta+V)\tau)$ with respect to $V$ and $\tau$
separately. Indeed, for a fixed $V$, the maximum with respect to
$\tau$ is given by \eqref{tau_V}, and for a fixed $\tau$, the maximum
with respect to $V$ is given by
\begin{align*}
  V(x) = -K_{V} \tau(x,x)^{\frac 1 {\gamma + \frac d 2 - 1}},
\end{align*}
where  $K_{V}$ is a normalization parameter chosen to ensure that $\int
V^{\gamma + \frac d 2} = 1$, and $\tau(x,y)$ is the integral kernel of
$\tau$.

This suggests the following maximization scheme: Given an approximate
maximum $V_{n}$, we set $\tau_{n} = K_{\tau} (-\Delta +
V_{n})_{-}^{\gamma-1}$ and $V_{n+1}(x) = -K_{V} \tau_{n}(x,x)^{\frac 1
{\gamma + \frac d 2 - 1}}$, and iterate. Explicitely:
\theoremstyle{definition}
\newtheorem{algorithm}{Algorithm}
\begin{algorithm}{Maximization algorithm}
\begin{enumerate}
\item Compute the negative eigenvalues $\lambda_{i}$ and corresponding
  eigenvectors $\psi_{i}$ of $-\Delta + V_{n}$
\item Set $\rho_{n} = \sum_{i} (-\lambda_{i})^{\gamma-1} \psi_{i}^{2}$
\item Compute $K_{n} = \norm{\rho_{n}^{\frac 1 {\gamma + \frac d 2 -
        1}}}_{\gamma+\frac d 2}^{-1} = \lp\displaystyle\int \rho_{n}^{\frac {\gamma + \frac
    d 2} {\gamma + \frac d 2 -
        1}}\rp^{- \frac 1 {\gamma+\frac d 2}}$
\item Set $V_{n+1} = - K_{n} \rho_{n}^{\frac 1 {\gamma + \frac d 2 - 1}}$
\end{enumerate}
\end{algorithm}

By construction, this algorithm ensures that
\begin{align*}
  E(V_{n+1})^{1/\gamma} &= - \tr((-\Delta+V_{n+1}) \tau_{n+1})\\
  &\geq -\tr((-\Delta+V_{n}) \tau_{n+1})\\
  &\geq -\tr((-\Delta+V_{n}) \tau_n)\\
  &= E(V_{n})^{1/\gamma}
\end{align*}

Therefore, the sequence $E(V_{n})$ is non-decreasing, and since it is
bounded from above, it converges. In general though, $V_{n}$ may not
converge. We give examples in Section \ref{numres} where, because of
the translation invariance of the functional, $V_{n}$ splits into two
bumps that separate from each each other. Even when this behavior is
forbidden, for instance by imposing a finite domain as we do in
numerical computations, a rigorous convergence analysis of the
algorithm is still an open problem.

Note that, even if Algorithm 1 was derived from Proposition
\ref{prop} for $\gamma \geq 1$, it remains a reasonable algorithm when
$\gamma < 1$. In this case, though, the monotonicity of $E(V_{n})$ is
not guaranteed, and indeed was found to be false in numerical
computations. Nevertheless, we did not notice any specific convergence
issue for $\gamma < 1$.

Algorithm 1 can also be seen as a fixed-point scheme for the critical
points of the maximization problem. Indeed, the Euler-Lagrange
equations for \eqref{variational} are
\begin{align}
  \label{scf}
  V(x) = -K_{V} \left[(-\Delta+V)_{-}^{\gamma-1}(x,x)\right]^{\frac 1 {\gamma + \frac d 2 - 1}},
\end{align}
This is a self-consistent set of equations similar to systems such as
the Hartree-Fock equations of quantum
chemistry\cite{lieb1977hartree}. Our algorithm is similar in spirit to
the Roothaan method\cite{roothaan1951}. In our case, at least for
$\gamma \geq 1$, the scheme monotonically increases the objective
function. Therefore, the oscillatory behavior often seen in the
Hartree-Fock model\cite{cances2000,levitt} cannot occur here. Even for
$\gamma < 1,$ we did not see any such oscillations numerically, and
always observed linear convergence (\ie $\norm{V_{n} - V_{\infty}}
\leq \nu^{n}$ for some $\nu$, $0 < \nu < 1$), or slow separation of
bumps, as will be discussed in Section \ref{numres}.

This algorithm is also used in the context of the Lieb-Thirring
inequalities by Arnold and Dolbeault in 1D\cite{dolbeaultcomm}.
\subsection{Radial algorithm}
\label{radialalg}
Most of our multidimensional computations were done in a radial
setting. To see why this is appropriate, consider the above iteration
for $d \geq 2$, when $V_{n}$ is radial. Then, the Laplacian splits
into $\Delta_{r} + \frac 1 {r^{d-1}} \Delta_{\theta}$, where $\Delta_{r}$
is the radial Laplacian, and $\Delta_{\theta}$ the Laplace-Beltrami
operator on $S^{d-1}$. The eigenvectors of $\Delta_{\theta}$ are
explicitly known to be the functions whose orthoradial part are the
spherical harmonics, labeled by the integers $\ell$ and $m$. Since
$V_{n}$ is radial, $-\Delta_{r} + V_{n}$ commutes with
$\Delta_{\theta}$ and can therefore be diagonalized in the same basis
(separation of variables). We write these eigenvectors of the form
$\psi_{i,\ell,m}(x) = \phi_{i,\ell}(\abs x) J_{\ell,m}(x/\abs x)$, where
$J_{\ell,m}$ is the spherical harmonics of degree $\ell$ and order $m$. The
radial parts $\phi_{i,\ell}$ satisfy the equation
\begin{align}
  \label{eigen_l}
  - \frac 1 {r^{d-1}} (r^{d-1} \phi_{i,\ell}')' + \frac{\ell(\ell+d-2)}{r^{2}}\phi_{i,\ell} +
  V \phi_{i,\ell} = \lambda_{i,\ell} \phi_{i,\ell},
\end{align}
and each $\lambda_{i,\ell}$ has multiplicity
\begin{align*}
  h(d,\ell) = \binom{d+\ell-1}{\ell} - \binom{d+\ell-3}{\ell-2}
\end{align*}
(see \cite{stein1971introduction}.)

Therefore, we can obtain all the negative eigenvectors and eigenvalues
by solving only \eqref{eigen_l}. Furthermore, since the lowest
eigenvalue decreases as $\ell$ increases, we can iterate over $\ell$
and stop whenever the lowest eigenvalue becomes positive. Next, we
compute
\begin{align*}
  \rho_{n}(x) &= \sum_{\ell} \sum_{i} \sum_{m} (-\lambda_{i,\ell})^{\gamma-1}
  \psi_{i,\ell,m}(x)^{2}\\
  &= \sum_{\ell} h(d,\ell) \sum_{i}  (-\lambda_{i,\ell})^{\gamma-1} \phi_{i,\ell}(r)^{2}
\end{align*}
where the sum only ranges over all negative eigenvalues. This is again
radial, and therefore so is $V_{n+1}$. To summarize:

\begin{algorithm}{Radial maximization algorithm}
\begin{enumerate}
\item Compute the negative eigenvectors $\phi_{i,\ell}$ and
  eigenvalues $\lambda_{i,\ell}$ of \eqref{eigen_l} by increasing
  $\ell$ until the lowest eigenvalue  $\lambda_{0,\ell}$ becomes positive
\item Set $\rho_{n} = \sum_{\ell} \sum_{i} h(d,\ell)
  (-\lambda_{i,\ell})^{\gamma-1} \phi_{i,\ell}^{2}$
\item Compute $K_{n} = \norm{\rho_{n}^{\frac 1 {\gamma + \frac d 2 -
        1}}}_{\gamma+\frac d 2}^{-1} = \lp\displaystyle\int \rho_{n}^{\frac {\gamma + \frac
    d 2} {\gamma + \frac d 2 -
        1}}\rp^{- \frac 1 {\gamma+\frac d 2}}$
\item Set $V_{n+1} = - K_{N} \rho_{n}^{\frac 1 {\gamma + \frac d 2 - 1}}$
\end{enumerate}  
\end{algorithm}

Note that this algorithm is not an approximation: the iterates
generated by this algorithm coincide with the ones from Algorithm 1
when the initial guess $V_{0}$ is radial.
\section{Discretization}
\subsection{Galerkin basis and weak formulation}
To discretize this algorithm, we use a Galerkin basis on which we
expand $V$ and the eigenvectors. This discretization is variational at
two levels. First, the choice of eigenvectors is restricted to a
finite-dimensional subspace. By the min-max principle, this leads to
overestimation of the eigenvalues and therefore underestimation of
$E(V)$ for a given $V$. Second, the choice of potentials $V$ is also
restricted, decreasing the value of $\max_{\int V^{\gamma+\frac d 2} =
  1} E(V)$.

Our choice of discretization, rather than for instance finite
differences, has the advantage that numerical examples can provide
lower bounds on the best constants $R_{\gamma,d}$, with machine
precision as the only source of errors. A disadvantage is that the
algorithm we use involves taking powers of functions. There is no
exact way to do that in a Galerkin basis, and we must use an
approximation, which causes a loss of accuracy in the fixed-point
algorithm.

For the non-radial 1D and 2D cases, we simply use standard finite
elements. The radial case is less standard, and we must derive a weak
formulation of \eqref{eigen_l}. There are several possibilities here. The
simplest is to use a change of variable $\varphi(r) = r^{\frac{d-1}2}
\phi(r)$ to transform the equation back to the more standard form
\begin{align*}
  - \varphi'' + \frac {(l-\frac{d-1}{2})(l+\frac{d-3}{2})}{r^{2}}
  \varphi + V(r)\varphi = \lambda \varphi.
\end{align*}
We obtain a weak formulation by multiplying by a test function $u$ and
integrating:
\begin{align}
  \label{eigen_l_weak_bad}
  \int\left[ \phi' u' + \lp\frac {(l-\frac{d-1}{2})(l+\frac{d-3}{2})}{r^{2}}
    + V\rp \phi u\right] = \lambda \int \phi u,
\end{align}

Expanding the function $\phi$ on a Galerkin basis $\phi(r) = \sum_{i}
x_{i} \chi_{i}(r)$, this problem
transforms into the generalized matrix eigenvalue problem
\begin{align}
  \label{eigenproblem}
  A x = \lambda M x,
\end{align}
where
\begin{align}
  \label{stiffness}
  A_{ij} &= \int \chi_{i}' \chi_{j}' +
  \lp\frac{(l-\frac{d-1}{2})(l+\frac{d-3}{2})}{r^{2}} + V\rp\chi_{i}
  \chi_{j},\\
  \label{mass}
  M_{ij} &= \int \chi_{i}
  \chi_{j},
\end{align}
are the stiffness and mass matrices.

When $V$ is expanded on the same basis, we can compute the matrix
elements, solve the eigenvalue problem \eqref{eigenproblem}, and
transform back to $\phi$. However, for $d = 2, l = 0$, $\varphi$
behaves like $\sqrt r$ at $0$, and the singularity of the derivative
prevents an accurate discretization.

Another possibility is to obtain a weak formulation of \eqref{eigen_l}
directly. We have to remember that since the $\phi$ functions are only
radial parts of a $d$-dimensional function, the $L^{2}$ inner product
between two functions $\phi_{1}$ and $\phi_{2}$ is $\int \phi_{1}
\phi_{2} r^{d-1}$. This has to be taken into account in the weak
formulation in order to obtain a self-adjoint equation. Multiplying
\eqref{eigen_l} by $r^{d-1} u$, where $u$ is a test function, and
integrating by parts, we obtain:
\begin{align}
  \label{eigen_l_weak}
  \int\left[ r^{d-1} \phi' u' + \lp\frac{l(l+d-2)}{r^{2}} + V\rp r^{d-1}\phi u\right] = \lambda \int r^{d-1}\phi u.
\end{align}

Then, as with \eqref{eigen_l_weak_bad}, we can transform this into a
matrix equation and solve it. The disadvantage is that the
integrations required for the assembly step are more involved, and
therefore we only use it for the case $d = 2, l = 0$ where the other
method fails.

\subsection{Finite elements}
We use a Finite Element basis of piecewise linear functions on a grid
of $[0,L]$. The grid we use is a nonuniform grid of $N$ points, with
more points around $0$, to accommodate for the singularity of
\eqref{eigen_l}. $L$ is chosen large enough so that all the
eigenvectors associated to negative eigenvalues of $-\Delta + V$ can
be represented accurately in the basis. But if $\psi$ is an
eigenvector with negative eigenvalue $\lambda$, then, $\psi$ decays as
$\exp(-\sqrt{-\lambda} r)$. This shows that the discretization is
problematic for eigenvalues close to zero, as we shall see in Section
\ref{numres}.

For our piecewise linear basis functions, it is easy to compute the
matrix elements (\ref{stiffness},\ref{mass}) when $V_{n}$ is expanded
on this same basis $\chi_{i}$. We then solve the eigenproblem
\eqref{eigenproblem}, and obtain the eigenvectors. To expand
$\rho^{\frac 1 {\gamma + \frac d 2 - 1}}$ on the basis, we simply
chose the expansion that is exact on the grid points. Then the
normalization can be performed exactly, and the iteration is carried
out.

We present convergence results in \figref{fig:conv_NL}. As expected
from piecewise linear basis functions, we obtain $O(1/N)$ convergence of the
eigenfunctions $\phi$ in $H^{1}$ norm, and $O(1/N^{2})$ convergence of
the eigenvalues $\lambda$ (and therefore of $E(V)$). For a given
stepsize, the discretization error is exponential with respect to $L$,
with a decay rate equal to the decay rate of the eigenfunctions, \ie
$\sqrt{-\lambda}$. Although we used a uniform grid in one dimension in
\figref{fig:conv_NL}, similar results were checked to hold for our
non-uniform grid in $d$ dimensions, using the weak formulation
\eqref{eigen_l_weak_bad}, except for $d = 2, l = 0$, where the
convergence was slower and the weak formulation \eqref{eigen_l_weak} had to
be used to get the same convergence rates.

\begin{figure}[H]
  \centering
  \resizebox{\textwidth}{!}{ \includegraphics{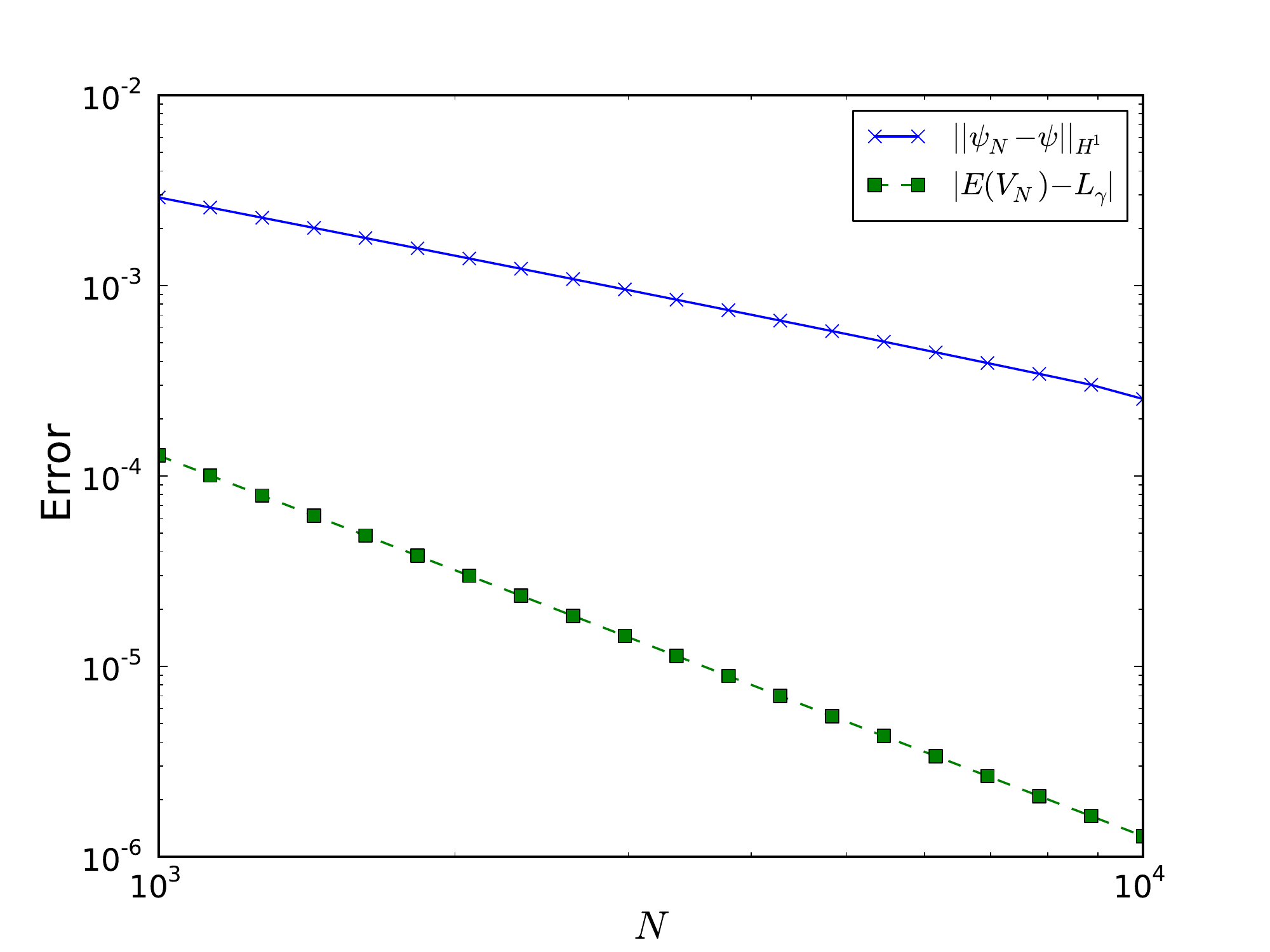} \includegraphics{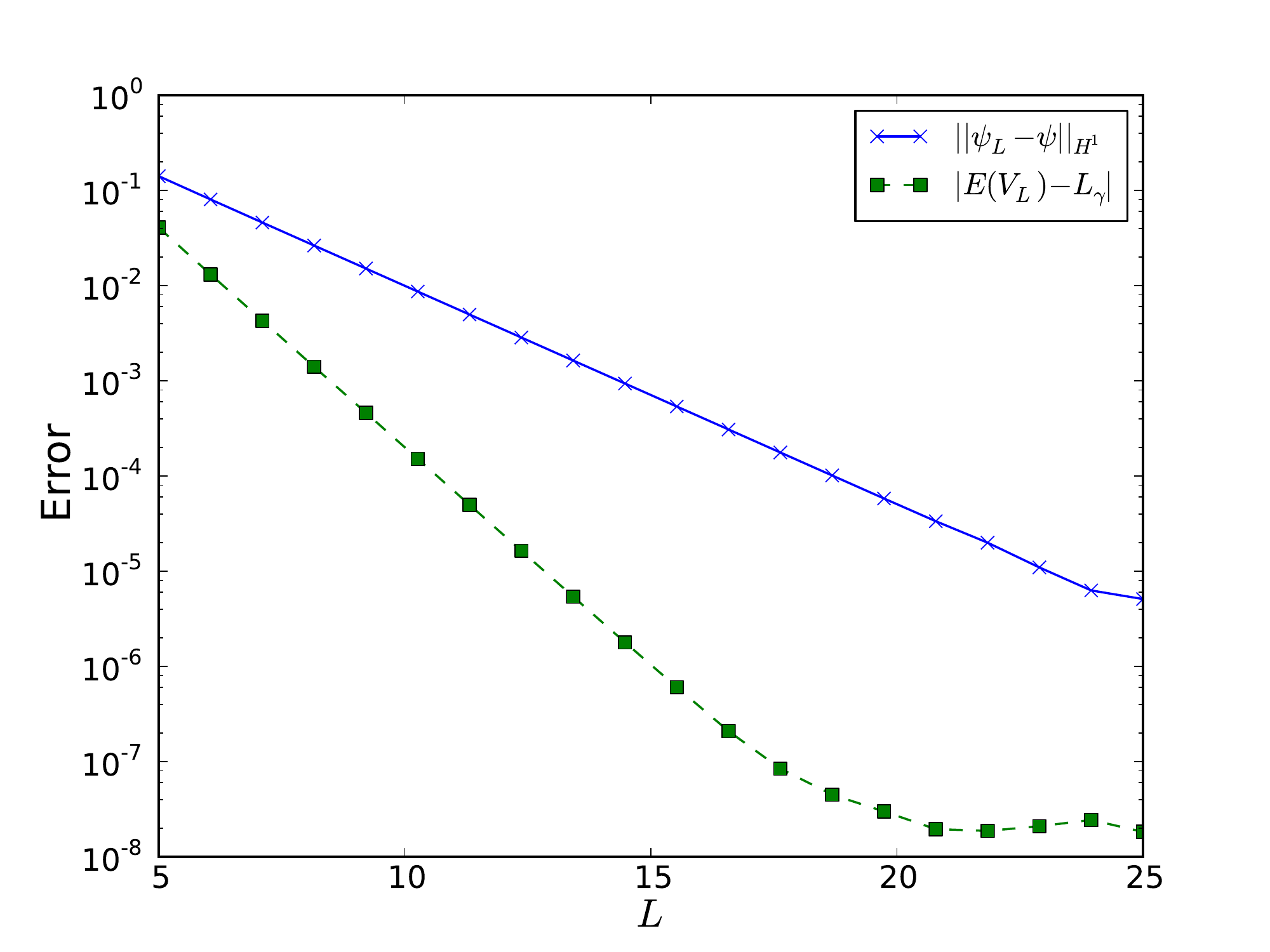}}
  \caption{Convergence analysis with respect to $N$ and $L$.  The
    figure on the left illustrates the convergence of the
    eigenfunctions and eigenvalues with respect to the number of grid
    points $N$, with a fixed $L = 40$, large enough to cause a
    negligible error. The slopes, found by linear regression, are
    $-1.043$ for the eigenvectors and $-1.999$ for the eigenvalues,
    close to their theoretical values of $-1$ and $-2$.  The figure on
    the right illustrates the convergence of the eigenfunctions and
    eigenvalues with respect to the domain size, with constant
    stepsize $h = 5 \times 10^{-4}$. The slope for the eigenvectors is
    $- 0.5285 \approx \sqrt{-\lambda} = -0.5283$. When the domain size
    gets large enough, the error due to $h$ dominates, and causes a
    plateau.}
  \label{fig:conv_NL}
\end{figure}

\subsection{Diagonalization}
The most computationally challenging step in our algorithm is the
problem of computing all  the negative eigenvalues and associated
eigenvectors of a generalized eigenvalue problem $A x = M x$, where
$A$ and $M$ are large sparse symmetric matrices. The spectrum of $A$
consists of $n$ negative eigenvalues, and a large number of positive
eigenvalues, which can be seen as perturbations of the spectrum of the
discrete Laplacian.

In order to compute the $n$ negative eigenvalues, where $n$ is
unknown, we compute the $k$ lowest eigenvalues, check if the $k$-th
one is positive, and repeat the process with a larger $k$ if not. The
computation of the $k$ first eigenvalues can be done by standard
packages such as ARPACK\cite{lehoucq1998arpack}. However, standard
algorithms such as Arnoldi iteration are not suited for the
computation of inner eigenvalues, and one has to solve for the largest
eigenvalues of the shifted and inverted problem $(A-\sigma)^{-1} x =
(\lambda-\sigma)^{-1} x$ instead to ensure reasonably fast
convergence. This requires an adequate shift (one that is close to the
bottom of the spectrum of $A$). Even with this shift-and-invert
strategy, the group of eigenvalues one needs to locate is not
well-separated from the rest of the spectrum, and becomes less and
less so as $L$ increases. This leads to slow convergence for large
$L$.

\subsection{Implementation}
We implemented the algorithm in Python, using the Numpy/Scipy
libraries\cite{scipy}, with the ARPACK eigenvalue
solver\cite{lehoucq1998arpack}.
\section{Numerical results}
\label{numres}

We used the algorithm described above to compute critical points of
the variational problem \eqref{variational}, that is, a solution to
the self-consistent equation \eqref{scf}. Our strategy was to find
a critical point by running the algorithm on a suitable initial
potential $V_{0}$ (for instance, a Gaussian of specified width). Once
convergence is achieved for a specific $\gamma$, we can run the
algorithm for $\gamma + \Delta \gamma$, using as initial potential the
one found for $\gamma$. We have been unable to analytically prove the
correctness of this method, for instance by checking the conditions of
the implicit function theorem. However, we found it reliable enough
for our purpose, as long as $\Delta \gamma$ was chosen small enough.

\subsection{The 1D case}
In one dimension we simply used a grid of size $[-L,L]$ with Dirichlet
boundary conditions. We reproduced the potential with one bound state
$V_{\gamma,1,1}$ of \cite{lieb1976ineq} by using for $V_{0}$ a
Gaussian of relatively small variance (see \figref{fig:conv_1BS}). In
this case, the algorithm converges linearly (\ie $\norm{V_{n+1} -
  V_{n}} \approx \nu^{n}$, for some convergence rate $\nu < 1$), and
very quickly (about twenty iterations to achieve machine precision).

\begin{figure}[H]
  \centering
    \resizebox{\textwidth}{!}{ \includegraphics{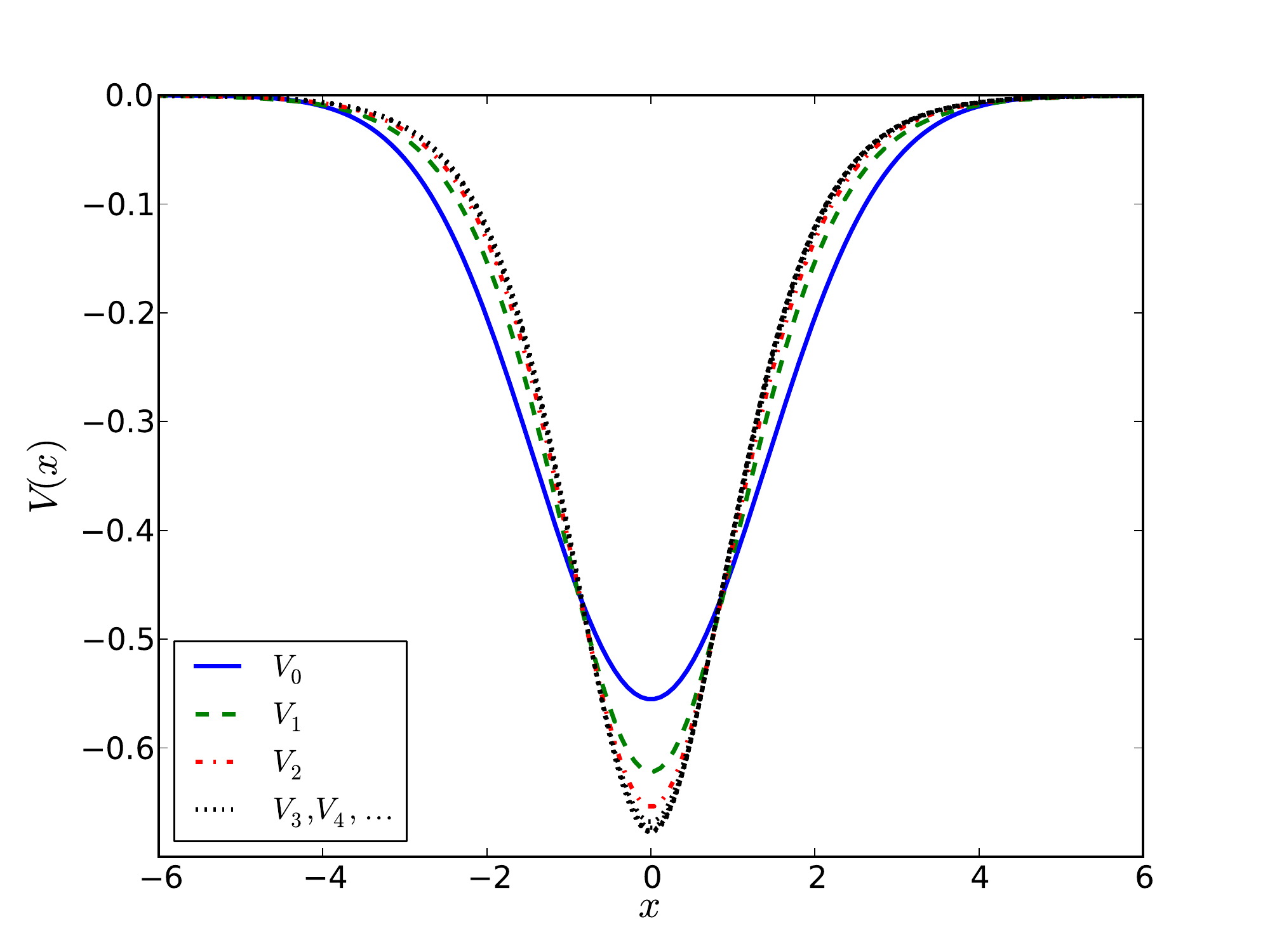}\includegraphics{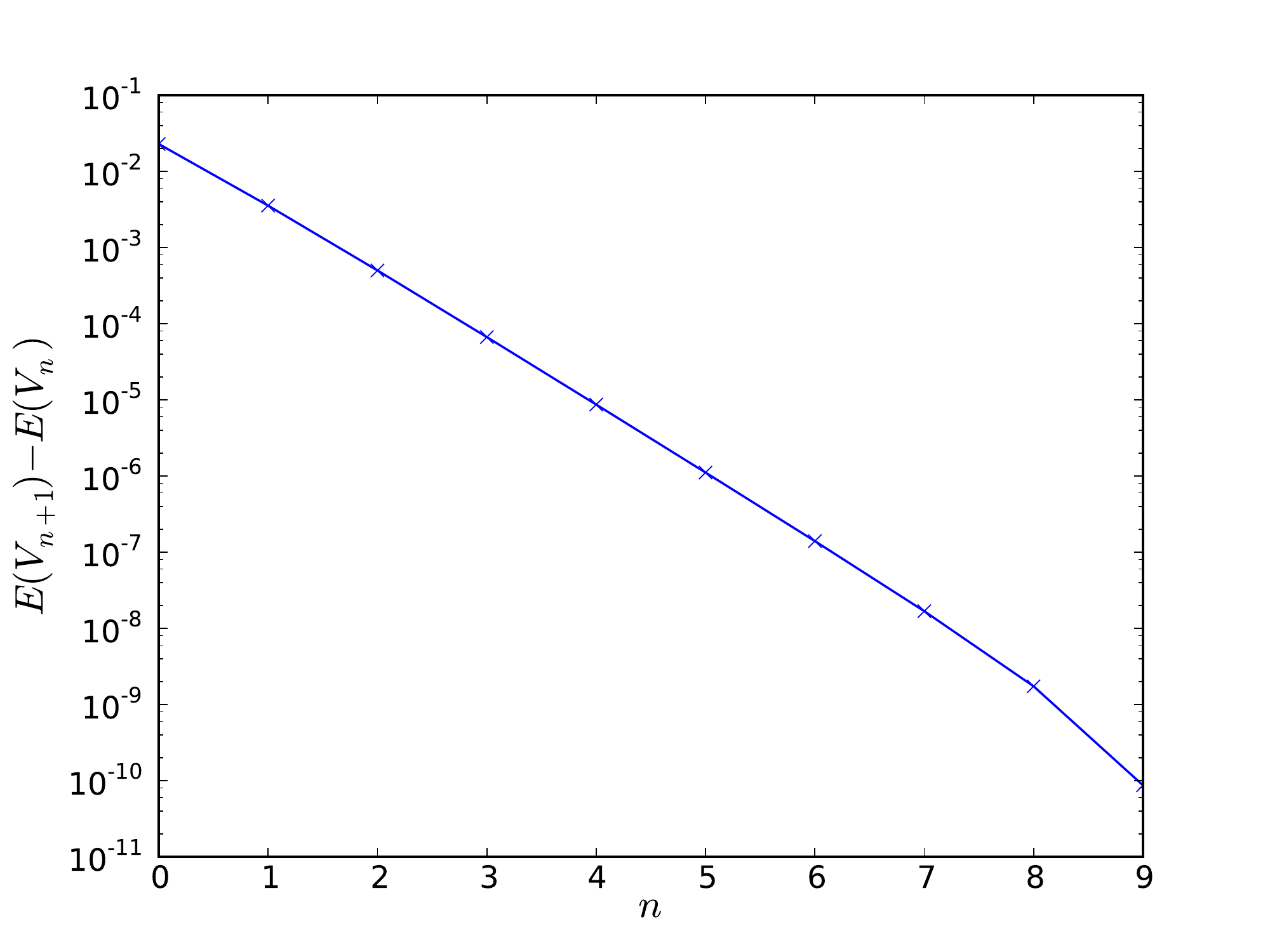}}
  \caption{Linear convergence from a Gaussian initial data towards
    $V_{\gamma,d,1}$. This plot is for $\gamma = 1.2$.}
  \label{fig:conv_1BS}
\end{figure}

Contrary to what we observe in higher dimensions, initializing the
algorithm with a Gaussian of larger variance did not make the
algorithm converge to other critical points. Instead, it leads to a
slow divergence where ``bumps'' separate, each bump corresponding to
the potential of $V_{\gamma,1,1}$ (see \figref{fig:div_1BS}).

\begin{figure}[H]
  \centering
    \resizebox{\textwidth}{!}{ \includegraphics{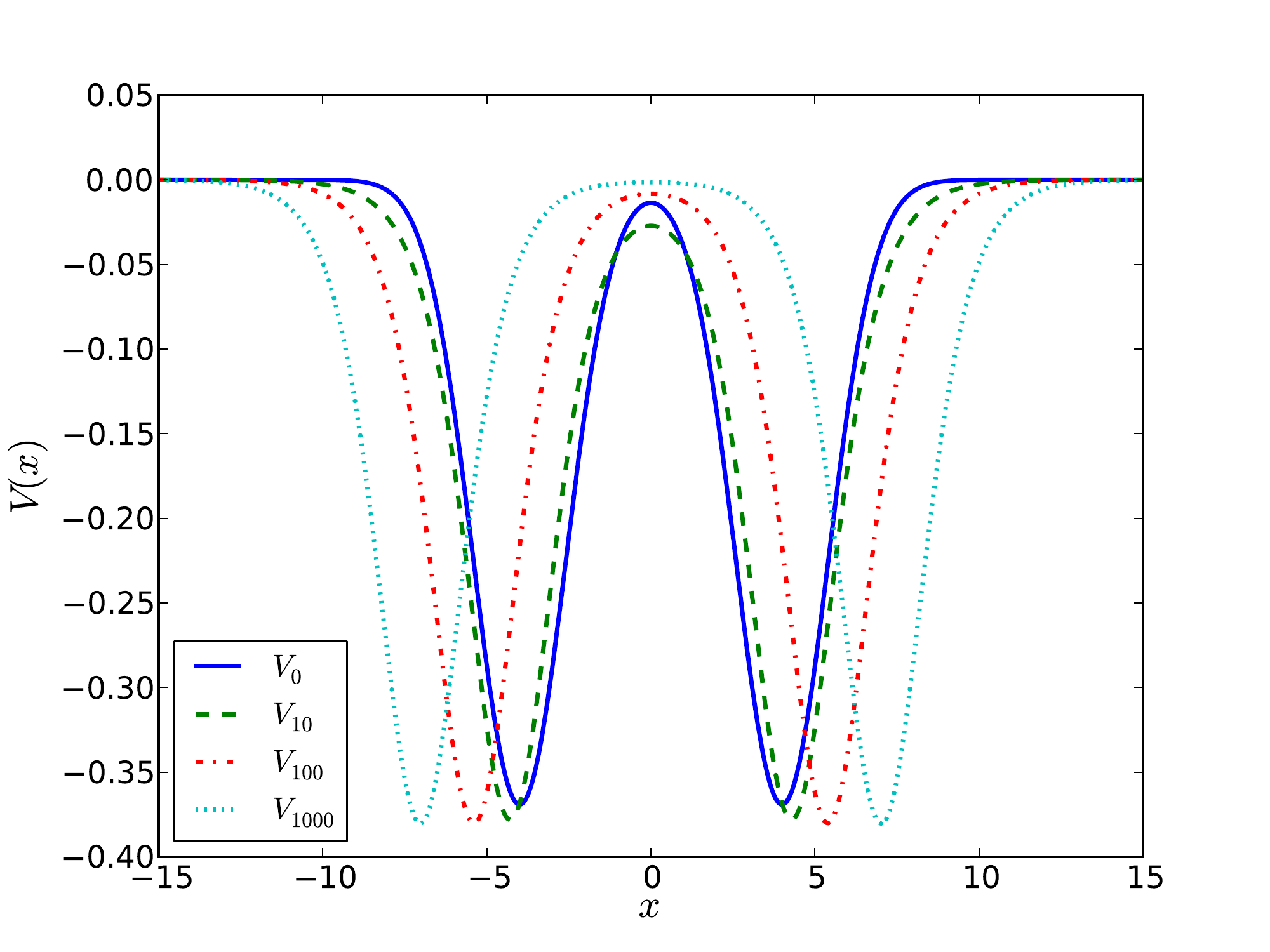}\includegraphics{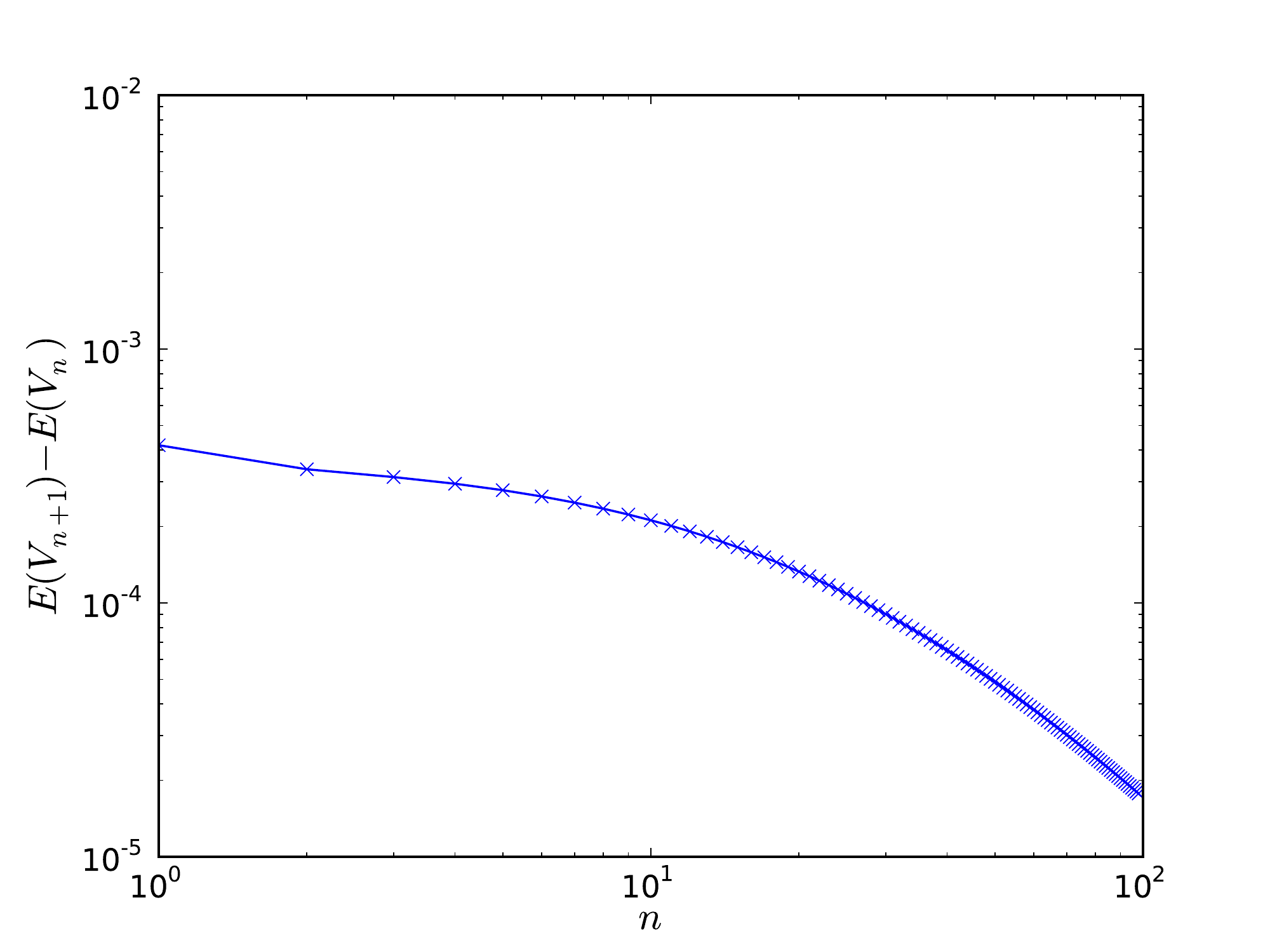}}
    \caption{Divergence from a sum of Gaussian bumps at $\gamma =
      1.2$. Initializing to a single Gaussian of large width yields
      similar results. The two bumps separate slowly, until the
      finiteness of $L$ forces convergence. Although not displayed
      here, the asymptotic logarithmic divergence
      \eqref{logarithmicsep} can be checked graphically, e.g. the
      spacing between $V_{100}$ and $V_{1000}$ is the same than
      between $V_{1000}$ and $V_{10000}$. The asymptotic slope of the
      log-log convergence plot is $-1$, which fits with the heuristic
      arguments given.}
  \label{fig:div_1BS}
\end{figure}

This effect occured regardless of the value of $\gamma$ in the range
$\gamma \in (1/2, 3/2)$. The divergence appears to be
logarithmic. More precisely, a numerical fit showed the asymptotic
relationship for the distance $L_{n}$ between the two bumps
\begin{align}
  \label{logarithmicsep}
  L_{n} \approx \frac 1 {2 \sqrt{-\lambda}} \log n,
\end{align}
where $\lambda$ is the unique negative eigenvalue of $- \Delta +
V_{\gamma,1,1}$.

This strikingly simple relationship can heuristically be understood by
the fact that, because the eigenfunctions $\phi_{1}$ and $\phi_{2}$
corresponding to the two bumps have exponential decay with decay
constant $\sqrt{-\lambda}$, their interaction is of order
$\exp(-\sqrt{-\lambda} L_{n})$. Due to cancellations, this interaction
leads to a correction of order $\exp(-2\sqrt{-\lambda} L_{n})$ in
$V_{n+1}$, and we have the approximate relationship $L_{n+1} \approx
L_{n} + K \exp(-2\sqrt{-\lambda} L_{n})$, which yields
\eqref{logarithmicsep}. A rigorous explanation of this is an
interesting question.

Based on these results, we conjecture that $V_{\gamma,d,1}$ is, up to
translation, the only maximizer of the functional. This would imply
that
\begin{align*}
R_{\gamma,1} = 2 \lp
\frac{\gamma-1/2}{\gamma+1/2}\rp^{\gamma-1/2}
\end{align*}
for $\gamma \leq \frac 3 2$, in agreement with the original conjecture of
Lieb and Thirring\cite{lieb1976ineq}.

\subsection{The 2D case}
Due to the high cost of accurately solving the maximization problem in
more than one dimension, we only obtained partial results in the
non-radial 2D case. The results indicate that the
algorithm either converges to radial critical points, or form slowly
separating bumps, as in one dimension. We have been unable to find any
example of non-radial critical points, although this does not mean
that they do not exist. An example of separation in bumps is provided
in \figref{fig:2D_sep}.

\begin{figure}[!h]
  \centering
  \resizebox{0.7\textwidth}{!}{ \includegraphics{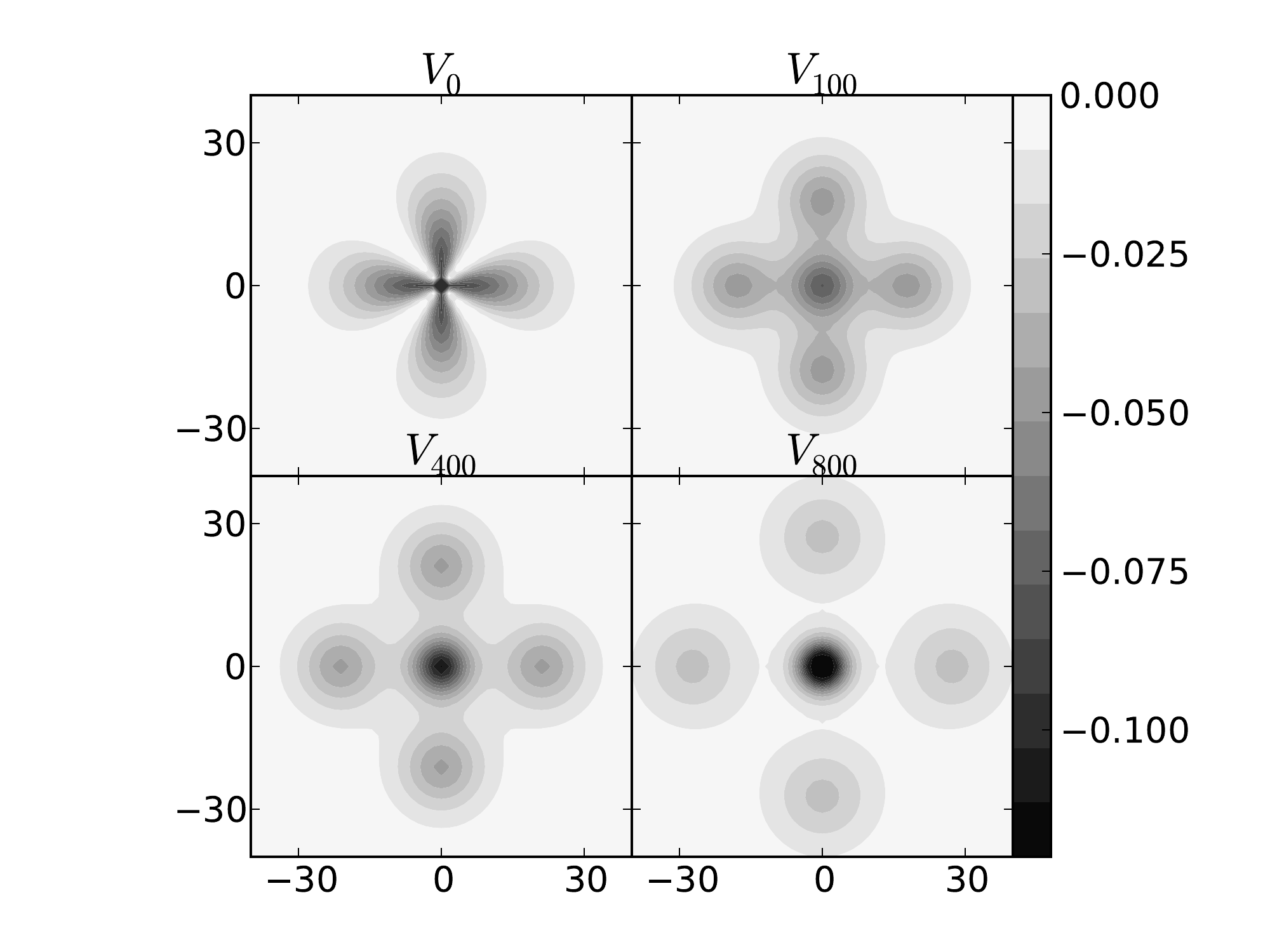}}
  \caption{Separation in bumps of 2D non-radial initial data at
    $\gamma = 1.2$, computed from Algorithm 1 with standard FEM. The
    initial data is taken to be a Gaussian in $r$ multiplied by an
    angular factor $(1+\cos(4\theta))$.}
  \label{fig:2D_sep}
\end{figure}



In the radial case, we followed branches of critical points of
\eqref{variational} using the continuation method on $\gamma$ we
described in Section \ref{numres}. We found this branch continuation
procedure robust. By varying the shape of the initial data, and in
particular its width, we were able to obtain different branches of
critical points. Generally speaking, as the width of the initial
potential increases, the number of negative eigenvalues of $-\Delta +
V$ increases.

We display the energy of these branches as a function of $\gamma$
in \figref{fig:2D}.

\begin{figure}[H]
  \centering
    \resizebox{\textwidth}{!}{
    \begin{overpic}
      {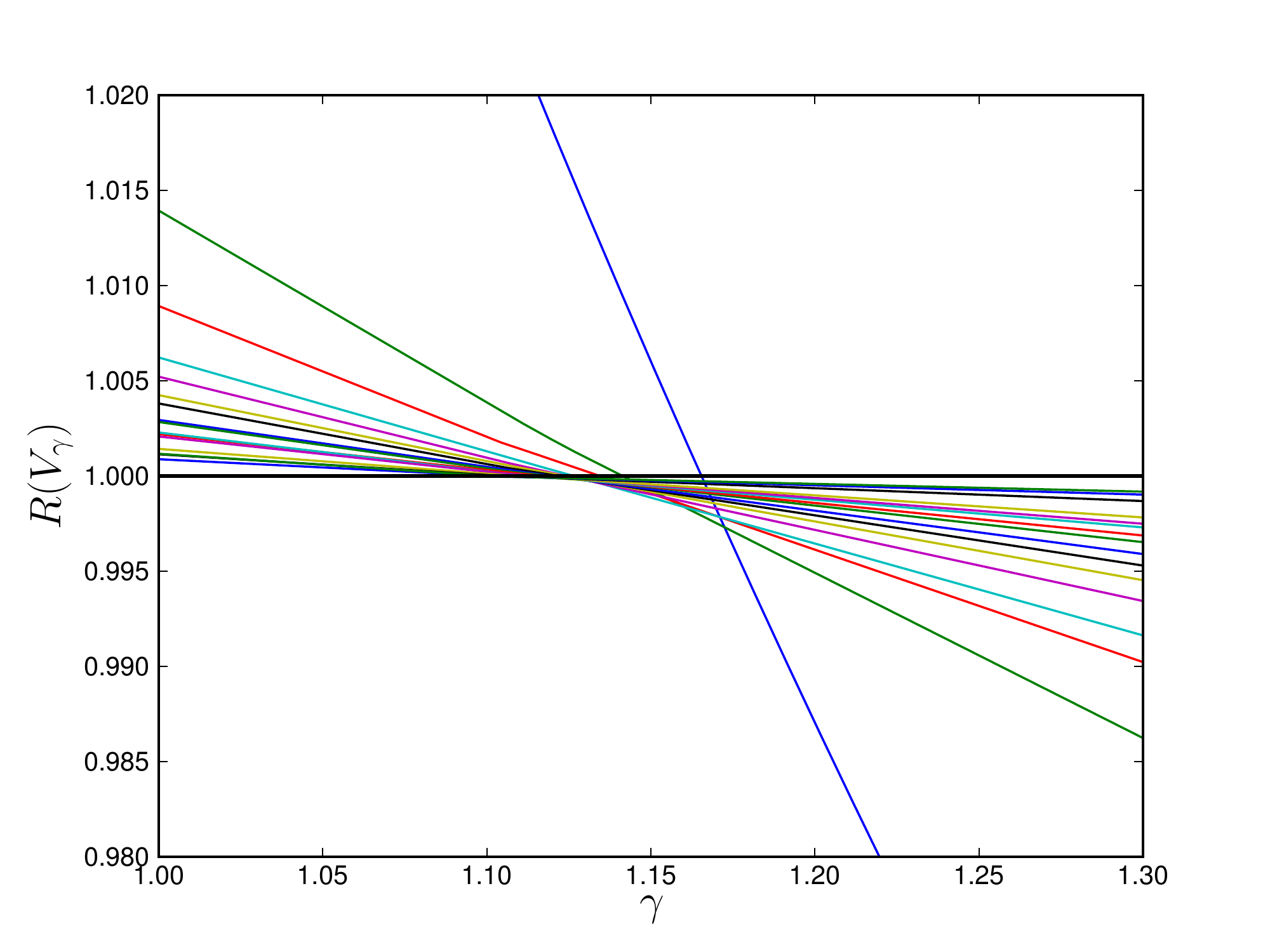}
      \Large
      \put(52,46){1}
      \put(34,46){6}
      \put(25,46){8}
      \put(16,46){11}
      \put(90,27){13}
      \put(93,28.5){17}
      \put(90,29.5){20}
      \put(93,30.5){24}
      \put(90,32.5){28,33,37,39,54}
      \put(90,36){81,112,139}
    \end{overpic}

  }
  \caption{$R(V)$ as a function of $\gamma$ for $d = 2$ using the
    branch continuation procedure in the radial setting (Algorithm
    2). The branches are labeled by the number of bound states they
    have. Based on these results, the maximizer seems to be
    $V_{\gamma,d,1}$, up to $\gamma \approx 1.16$. Above this, no
    branches were above the semiclassical regime $R = 1$.}
  \label{fig:2D}
\end{figure}

\begin{table}[H]
  \centering
  \begin{tabular}{|c|c|c|c|c|c|c|c|c|c|c|c|c|c|}
    \hline
    $k$            &1    &4    &6    &8    &11   &13   &17   &28   &54   &81   &139  \\
    \hline
    $\gamma_{c,2,k}$&1.165&1.150&1.141&1.135&1.126&1.124&1.124&1.119&1.110&1.105&1.100\\
    \hline
  \end{tabular}
  \caption{Values $\gamma_{c,2,k}$ of $\gamma$ at which some branches with $k$ bound
    states cross the threshold $R = 1$.}
  \label{tab:2Dcrossings}
\end{table}

First, we see that as the number of eigenvalues increases,
$R(V_{\gamma})$ tends to $1$, the semiclassical limit. For a given
$\gamma$, the Lieb-Thirring constant will be given by the supremum of
$R(V_\gamma)$ over such curves. From the branches depicted here, we see
that the supremum is always given by either $V_{\gamma,d,1}$ for
$\gamma < \gamma_{c}^{1} \approx 1.16$, and by the semiclassical limit
for $\gamma > \gamma_{c}^{1}$. All the other curves are below these
two (although not depicted in this zoomed plot, this holds true for
$0.5 < \gamma < 1.5$).

It is important to keep in mind that these branches only represent
critical points of the functional. They are generically local maxima
with respect to radial perturbations, but might not be stable with
respect to non-radial ones. For instance, \figref{fig:2D_sep_8BS}
depicts what happens when the radial potential with eight bound states
is perturbed non-radially by a Gaussian multiplicative noise, with
$\gamma = 1.1$. Because the potential with one bound state has a
higher energy, the potential splits into eight bumps. Performing the
same experiment $\gamma$ larger than about $1.17$, where the potential
with one bound state has a lower energy, when the potential with eight
bound states has a larger energy, we found that no splitting occurs,
which suggests that the potentials are stable beyond their crossing
with the one with one bound state.

\begin{figure}[H]
  \centering
  \resizebox{0.7\textwidth}{!}{ \includegraphics{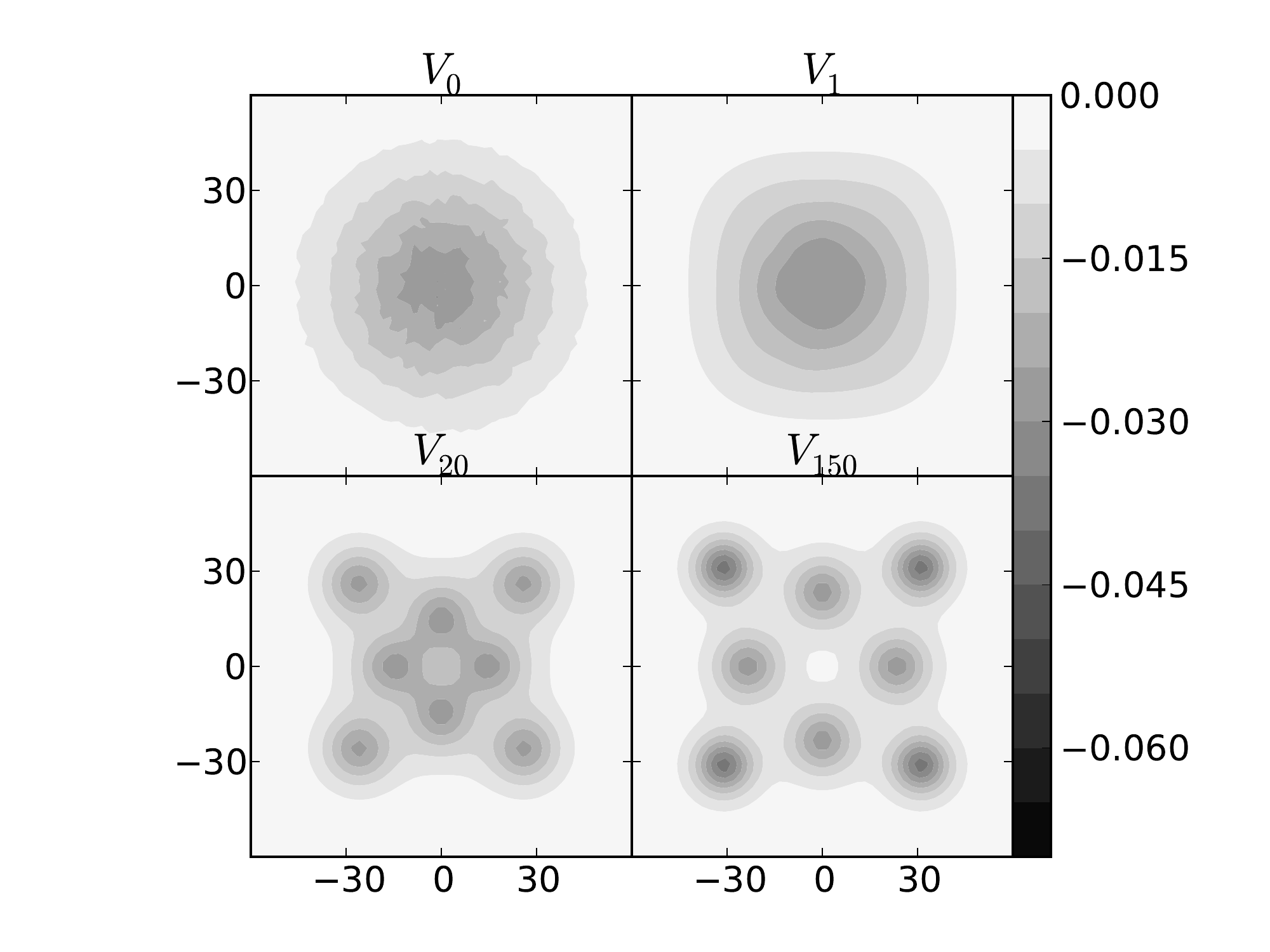}}
  \caption{Separation in bumps of the randomly perturbed radial
    potential with eight bound states, $\gamma = 1.1$.}
  \label{fig:2D_sep_8BS}
\end{figure}

Based on our numerical results, we conjecture that $\gamma_{c} =
\gamma_{c}^{1} \approx 1.16$. The only way this conjecture could be
false is if some other curve is above the one with one bound state. We
have not been able to find such a curve.

\subsection{The 3D case}
Due to the high cost of computation, we were unable to get meaningful
results in the non-radial setting, and only present our findings in
the radial case. Some of the radial potentials we found are presented
in \figref{fig:3D}, with numerical data about the crossings of the
curves in Table \ref{tab:3Dcrossings}.

\begin{figure}[H]
  \centering
    \resizebox{\textwidth}{!}{
    \begin{overpic}
      {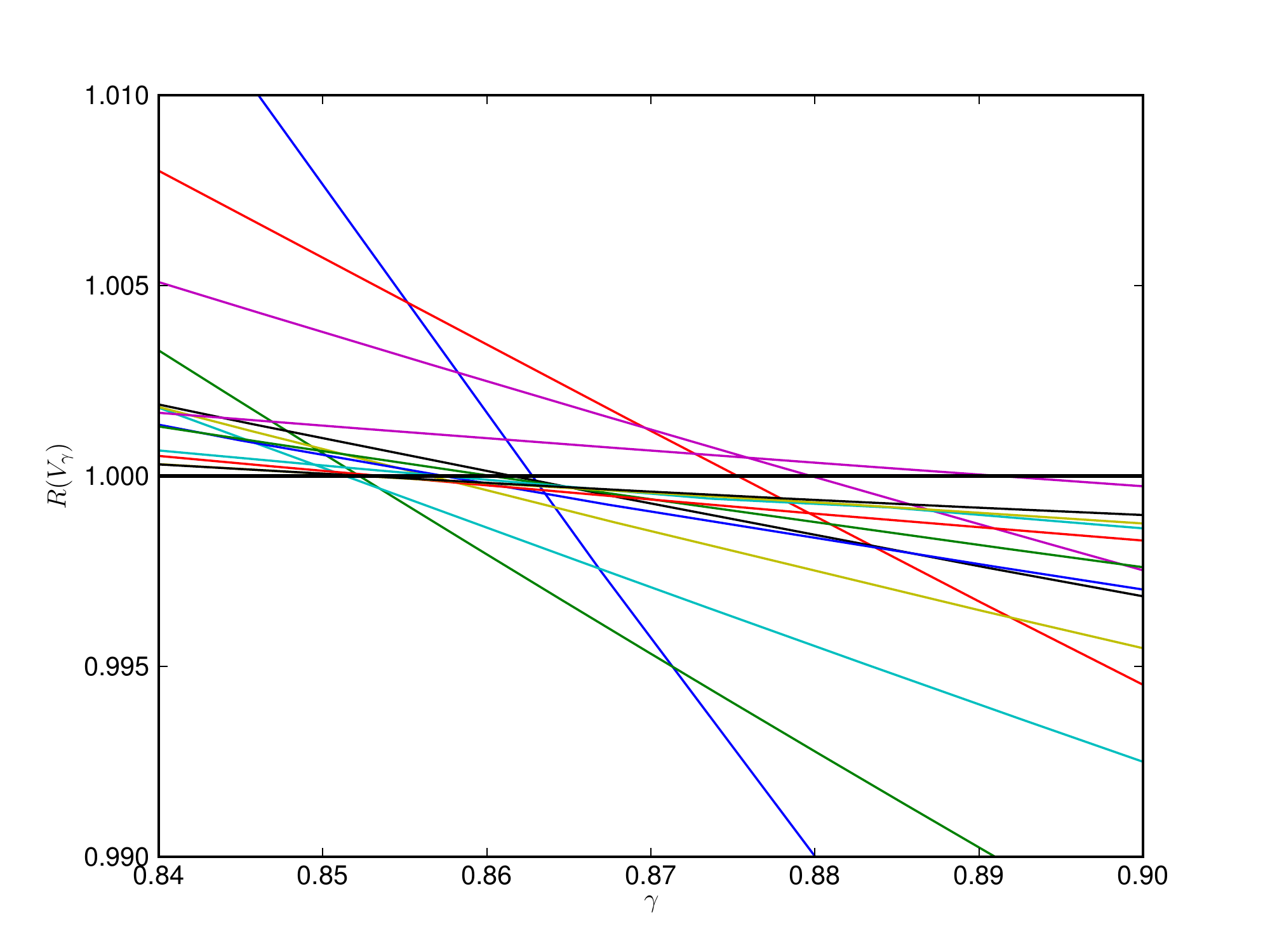}
      \Large
      \put(28,58){1}
      \put(72,12){4}
      \put(40,48){5}
      \put(76,20){10}
      \put(56,40){14}
      \put(91,24){21}
      \put(91,28){30,39}
      \put(91,30){55}
      \put(91,34){111,115,288,341}
      \put(64,39){140}
    \end{overpic}

  }
  \caption{$R(V)$ as a function of $\gamma$ for $d = 3$, with the same
    methodology as in \figref{fig:2D}. As $\gamma$ increases,
    different branches become maximizers, until $\gamma = 1$. The fact
    that only some branches are above the threshold $R = 1$ when the
    number of bound states increases results from the highly
    oscillatory behavior near $\gamma = 1$, as predicted in
    \cite{helffer1990riesz}.}
  \label{fig:3D}
\end{figure}

\begin{table}[H]
  \centering
  \begin{tabular}{|c|c|c|c|c|c|c|c|c|c|c|c|c|c|}
    \hline
    $k$            &1    &4    &5    &10    &14   &21   &30   &55   &111   &140   &341  \\
    \hline
    $\gamma_{c,3,k}$&0.863&0.852&0.875&0.851&0.880 &0.857&0.862&0.860&0.854&0.891&0.853\\
    \hline
  \end{tabular}
  \caption{Values $\gamma_{c,3,k}$ of $\gamma$ at which some branches with $k$ bound
    states cross the threshold $R = 1$.}
  \label{tab:3Dcrossings}
\end{table}

Contrary to the dimension 2, here some potentials with a higher number
of bound states have a higher $R$ than $V_{\gamma,d,1}$. The
corresponding curves in the $\gamma - R$ plane are flatter and
flatter, and intersect at a sequence of increasing
$\gamma_{c}^{k}$. This sequence seems to accumulate at $1$, in
accordance to Helffer and Robert's result\cite{helffer1990riesz},
which predicts the existence of similar potentials with a $R$ larger
than $1$ for every $\gamma < 1$ in the nearly semiclassical
regime. However, their study of an harmonic potential of varying width
showed a highly oscillatory behavior of $E(V)$ with respect to the
width of the potential. Although our numerical methods do not permit
us to investigate this regime (it would require a very large domain
size, with a correspondingly large number of points, and a very poor
conditioning of the eigenvalue problem), we expect the same behavior
to occur. This means that the computation of the maximizing branches
(here, those with 1, 5, 14 and 140 bound states) becomes harder and
harder.

The relative energies of the branches display a greater variety than
in the case $d = 2$, reflecting a more complicated energy
landscape. \figref{fig:maximizers} shows the profiles of some critical
potentials. Note that the potentials in dashed lines can be regarded
as ``anomalous'' versions of their similarly extended counterparts,
and have a lower energy. As can be expected, the potential branches
which are maximizers for some $\gamma$ have a profile that is
decreasing in $r$.

\begin{figure}[H]
  \centering
    \includegraphics[width=0.49\textwidth]{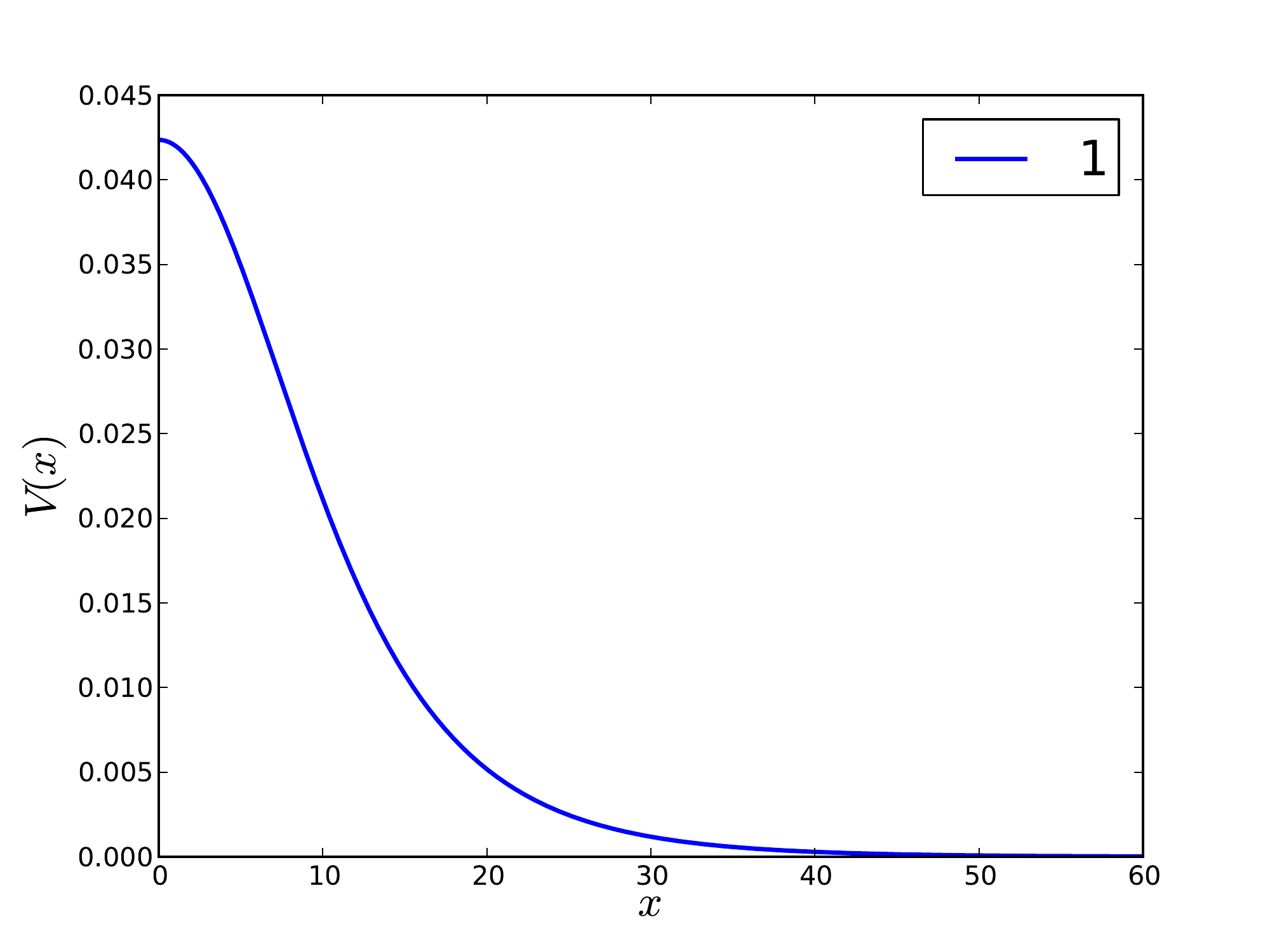}
    \includegraphics[width=0.49\textwidth]{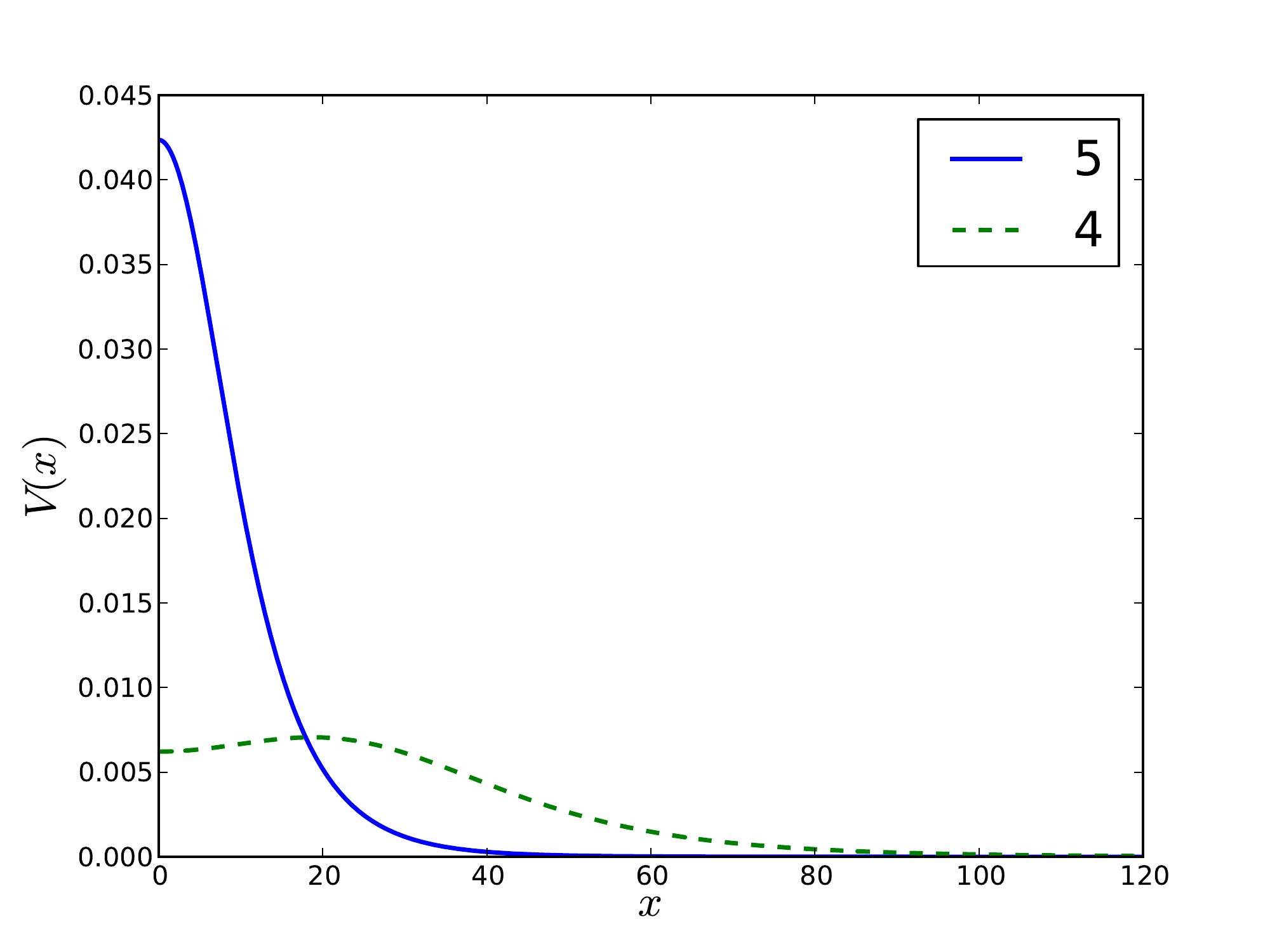}

    \includegraphics[width=0.49\textwidth]{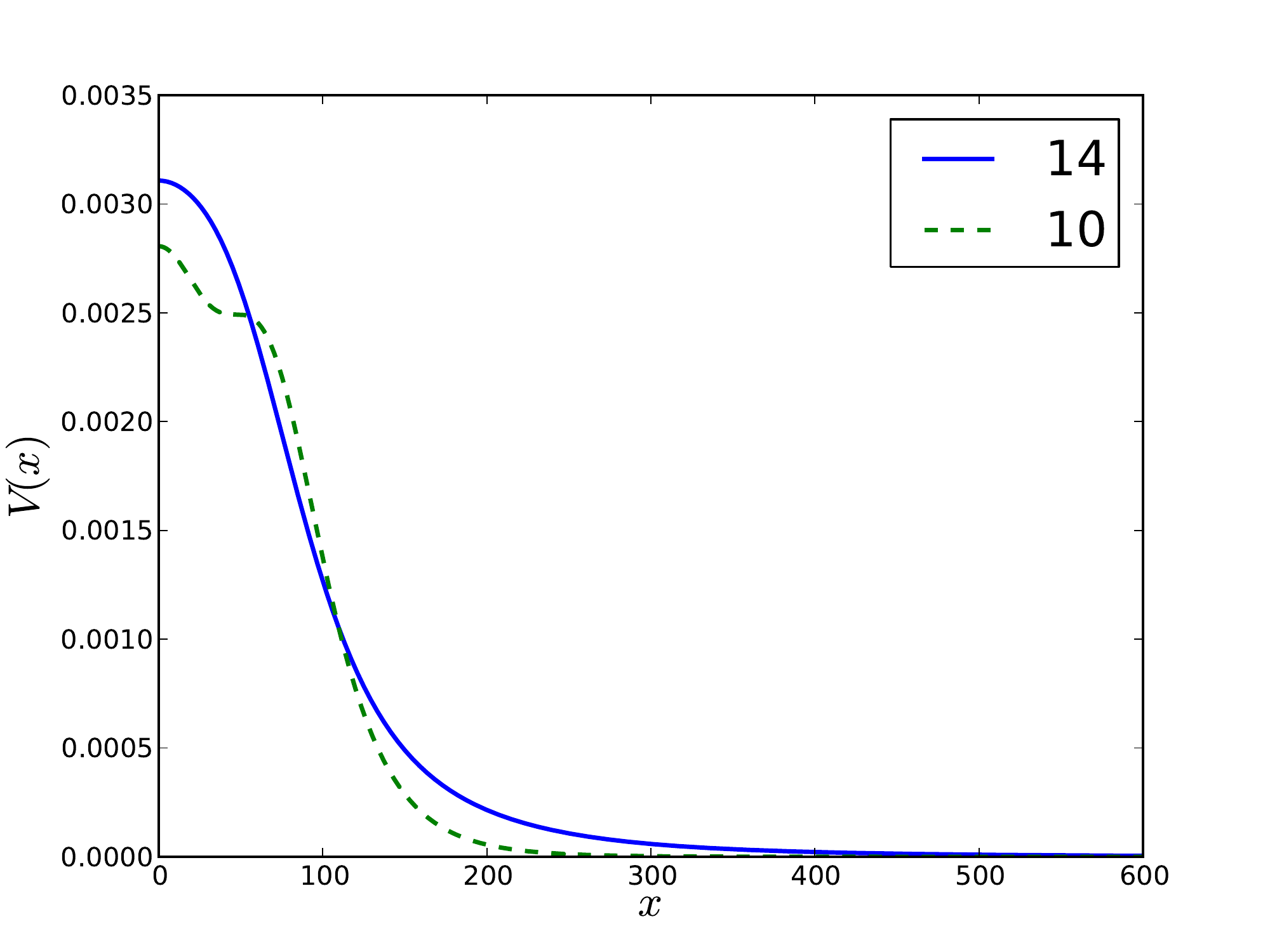}
    \includegraphics[width=0.49\textwidth]{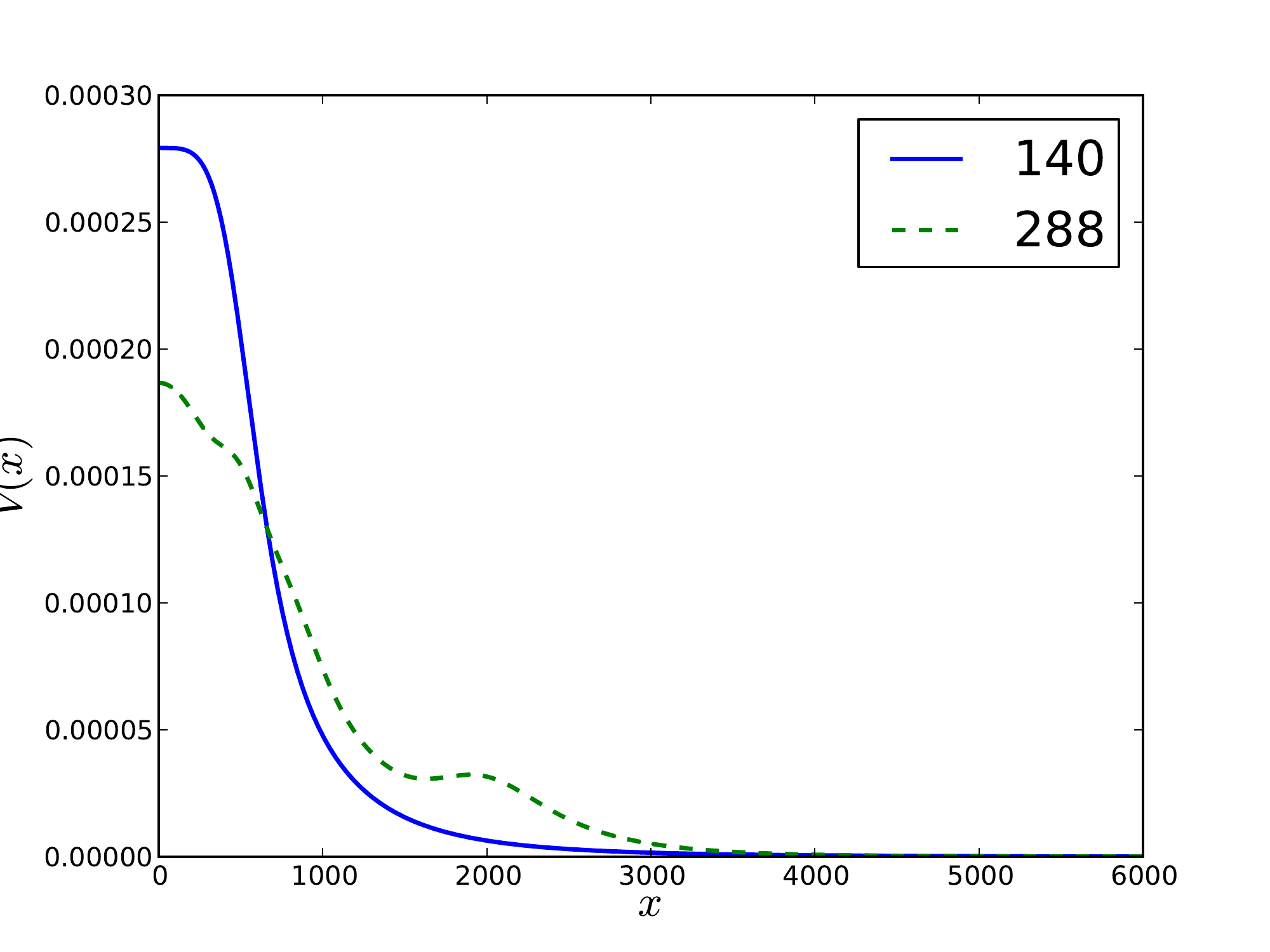}
    \caption{Shape of some critical points of $E(V)$, at $\gamma =
      1$. The qualitative shape of the curves does not change much
      when varying $\gamma$.}
  \label{fig:maximizers}
\end{figure}

Table \ref{tab:eigenvalues} displays the eigenvalue repartitions of a
particular potential, the one with 140 bound states, at $\gamma =
0.88$. Several patterns can be noted. First, for low values of $k$ and
$l$, the approximate relationship $\lambda_{k+1,l} = \lambda_{k,l+2}$
holds. This is because the associated eigenfunctions are localized
close to 0. In this region, the potential can be approximated by a
harmonic potential $V(0) + \frac 1 2 V''(0) x^{2}$, which leads to the
approximation $\lambda_{k,l} = V(0) + \sqrt{V''(0)} (2k+l + \frac 3
2)$, explaining the relationship $\lambda_{k+1,l} \approx
\lambda_{k,l+2}$. This is only valid for small $k$ and $l$, where the
harmonic approximation is valid. When the eigenvalues are close to
zero, another pattern emerges: the last negative eigenvalues have the
same value of $k+l$, leading to a triangular pattern in Table
\ref{tab:eigenvalues}. This seems to be true for the maximizing
branches (1, 5, 14, 140), but not for the others (which tend to have a
different ordering of the eigenvalues close to zero). We do not have
an explanation for this.

\begin{table}[H]
  \centering
  \begin{tabular}{|c|c|c|c|c|c|c|c|c|c|c|c|c|c|}
    \hline
    $\lambda_{k,l}$&$k = 1$    &$k = 2$    &$k = 3$    &$k = 4$    &$k =
    5$   &$k = 6$   &$k = 7$&$k = 8$\\
    \hline
    $l = 0$ &-85&-54&-30&-13&-4&-0.7&-0.02&+0.25\\
    \hline
    $l = 1$ &-69&-41&-20&-7.6&-1.8&-0.1&+0.02&\\
    \hline
    $l = 2$ &-54&-29&-12&-3.4&-0.3&+0.03&&\\
    \hline
    $l = 3$ &-39&-18&-5.8&-0.7&+0.04&&&\\
    \hline
    $l = 5$ &-26&-9&-1.3&+0.06&&&&\\
    \hline
    $l = 6$ &-14&-2.3&+0.08&&&&&\\
    \hline
    $l = 7$ &-3.8&+0.1&&&&&&\\
    \hline
  \end{tabular}
  \caption{Eigenvalue repartition for the branch with 140 bound
    states, with $\gamma = 0.88$, in units of $10^{-6}$ for readability. As in Section \ref{radialalg},
    $\lambda_{k,l}$ is the $k$-th eigenvalue of equation
    \eqref{eigen_l}. The positive eigenvalues are finite size effects
    and have no physical meaning, therefore we do not write them
    beyond the first one.}
  \label{tab:eigenvalues}
\end{table}

Every potential we were able to compute was below the $R = 1$
threshold for $\gamma > 1$. These results confirm the Lieb-Thirring
conjecture that $\gamma_{c,3} = 1$. However, the subtle behavior near
$\gamma = 1$ means that there might exist maximizing potentials for
$\gamma > 1$ we were unable to find.

\subsection{The case $d \geq 4$}
The results we obtained in the case $d \geq 4$ are similar to the case
$d = 3$, with $V_{\gamma,d,1}$ as the maximizer for small $\gamma$,
until it is outperformed by branches with a larger number of bound
states, which all fall under the semiclassical limit before $\gamma =
1$. However, the high cost of computation (the higher the dimension,
the more ill-conditioned the eigenvalue problem is) prevents us from
performing a systematic study.
\section{Conclusion}
In this paper, we used a maximization algorithm to investigate the
best constants of the Lieb-Thirring inequalities, an important open
problem of quantum mechanics. Discretizing the problem using finite
elements, we were able to numerically compute critical points of the
functional. Our main findings are the following:
\begin{itemize}
\item In one dimension, the only critical point we found is the
  potential with exactly one bound-state $V_{\gamma,1,1}$. For all
  initial data, the algorithm seems to either converge to this
  potential, or to split in separating bumps. This supports the
  conjecture that these potentials are the only maximizers of the
  functional for $\frac 1 2 \leq \gamma < \frac 3 2$.
\item In two dimensions, the only critical points we found were
  radial. For all initial data, the algorithm seems to either converge
  to a radial potential, or to split in diverging bumps. Among the
  radial potentials we were able to compute, $V_{\gamma,2,1}$ was the
  maximizer for $\gamma < 1.16$. After this point, the maximum
  corresponds to the semiclassical regime. The natural conjecture is
  that $V_{\gamma,d,1}$ is the maximizer for $\gamma < 1.16$, and
  therefore that $\gamma_{c,2} \approx 1.16$.
\item In three dimensions, branches of radial potentials with more
  than one bound state outperformed $V_{\gamma,d,1}$. This confirms
  the theoretical result from \cite{helffer1990riesz}, and provides
  new lower bounds (see \figref{fig:3D}). The
  natural conjecture is that there is a non-decreasing sequence of
  integers $n_{\gamma}$ with $n_{\gamma} \to \infty$ as $\gamma \to 1$
  such that the maximizer of the functional \eqref{variational} has
  $n_\gamma$ bound states.
\item In dimension $d \geq 4$, the results are similar to the
  three-dimensional case, and we conjecture the same behavior.
\end{itemize}

Beyond these results, the study hinted at a very rich and highly
nonlinear behavior of the maximizers of Lieb-Thirring
inequalities. This study is based on a simple numerical method (finite
element discretization). More involved computations could use a more
appropriate Galerkin basis, such as the Gaussian basis functions used
in quantum chemistry. This could allow for a more accurate computation
of potentials with a larger number of bound states, and a more
detailed exploration of the energy landscape.

On the theoretical side, the properties of the functional
\eqref{variational} remain unexplored. An open question is whether one
can prove the existence of maximizers. The behavior of the
maximization algorithm is also an interesting question, with the
separation in bumps particularly interesting. Finally, we believe that
our method could be adapted to generalizations such as models with
positive temperatures or positive density\cite{frank2011positive},
negative values of $\gamma$\cite{dolbeault2006lieb} or polyharmonic
operators $(-\Delta)^{l} + V$ (see \cite{netrusov1996lieb}).

\section*{Acknowledgments}
I would like to thank Mathieu Lewin and Éric Séré for their help and
advice.

\bibliographystyle{plain}
\bibliography{refs_lt.bib}

\end{document}